\definecolor{brown}{cmyk}{0, 0.72, 1, 0.45}
\definecolor{grey}{gray}{0.5}
\newcounter{rot}
  \def\d{\delta} 
\def\e{\epsilon}    
  \def\k{\kappa} 
 \def\m{\mu} \def\n{\nu} 
  \def\s{\sigma} 
 \def\om{\omega}
\newtheorem{maintheorem}{Theorem}
\newtheorem{conjecture}{Conjecture}
\newtheorem*{conjecture*}{Conjecture}
\newtheorem{theorem}{Theorem}[section]
\newtheorem*{theorem*}{Theorem}
\newtheorem{procedure}{Procedure}
\newtheorem{lemma}[theorem]{Lemma}
\newtheorem{definition}[theorem]{Definition}
\newcommand{\bfrac}[2]{\left(\frac{#1}{#2}\right)}
\newcommand{\rai}{\rightarrow \infty}
\newcommand{\set}[1]{\left\{#1\right\}}
\def\es{\emptyset}
\def\E{\mathbb{E}}
\def\P{\mathbb{P}}
\def\Pr{\mathbb{P}}
\newcommand{\scr}{\mathcal}
\newcommand{\eps}{\epsilon}
\newcommand{\floor}[1]{\left\lfloor #1 \right\rfloor}
\newcommand{\of}[1]{\left( #1 \right) }
\newcommand{\abs}[1]{\left| #1 \right|}
\newcommand{\sqbs}[1]{\left[ #1 \right]}
\renewcommand{\E}[1]{\mathbb{E}\sqbs{#1}}
\newcommand{\tbf}[1]{\textbf{#1}}
\renewcommand{\P}[1]{\mathbb{P}\left[ #1 \right]}
\renewcommand{\Pr}[1]{\mathbb{P}\left[ #1 \right]}
\newcommand{\ignore}[1]{}
\newcommand{\beq}[1]{\begin{equation}\label{#1}}
\newcommand{\eeq}{\end{equation}}
\newcommand{\Bin}{\operatorname{Bin}}
\newcommand{\vsi}{ V_{\s,i}}
\newcommand{\vsj}{V_{\s,j}}
\newcommand{\vs}[1]{V_{\s,{#1}}}
\newcommand{\epr}{$(\e,p)$-regular }
\newcommand{\gh}[1]{\widehat{G}_{#1}}
\newcommand{\nhv}{\widehat{N}_v}
\title{Packing tree factors in random and pseudo-random graphs}
\author{
Deepak Bal
\thanks{Department of Mathematics, Ryerson University, Toronto, ON, M5B 2K3, Canada.}
\and
Alan Frieze
\thanks{Department of Mathematical Sciences, Carnegie Mellon University, Pittsburgh, PA 15213.}
\thanks{Research supported in part by NSF Grant CCF2013110.}
\and
Michael Krivelevich
\thanks{School of Mathematical Sciences, Raymond and Beverly Sackler Faculty of Exact Sciences, Tel Aviv
University, Tel Aviv, 69978, Israel.
Research supported in part by a USA-Israel BSF grant and by a grant from the Israel Science Foundation.}
\and
Po-Shen Loh
\footnotemark[2]
\thanks{Research supported by NSF grant DMS-1201380, an NSA Young
Investigators Grant, and by a USA-Israel BSF grant.}
}
\begin{document}
\maketitle

\begin{abstract}
 For a fixed graph $H$ with $t$ vertices, an $H$-factor of a graph $G$ with
 $n$ vertices, where $t$ divides $n$, is a collection of vertex disjoint (not necessarily induced)
 copies of $H$ in $G$ covering all vertices of $G$. We prove that
 for a fixed tree $T$ on $t$ vertices and $\e>0$, the random graph $G_{n,p}$,
 with $n$ a multiple of $t$, with high probability contains a family of edge-disjoint
 $T$-factors covering all but an $\e$-fraction of its edges,
 as long as $\e^4np\gg \log^2n$. Assuming stronger
 divisibility conditions, the edge probability can be taken down to
 $p>\frac{C\log n}{n}$. A similar packing result is proved also for
 pseudo-random graphs, defined in terms of their degrees and
 co-degrees.

\end{abstract}

\section{Introduction}

Let $H$ be a graph with $t$ vertices and let $n$ be divisible by $t$. We
say that a graph $G=(V,E)$ with $n$ vertices has an \emph{$H$-factor} if
there exist vertex disjoint subgraphs (not necessarily induced) of $G$,
$H_1,\ldots,H_{n/t}$, which are all isomorphic to $H$. Note that the vertex
disjointness implies that the vertex set of the $H$-factor, $H_1 \cup
\cdots \cup H_{n/t}$, is equal to $V$.  $H$-factors have been an important
object in the study of random graphs.  Indeed, the most basic instance, a
$K_2$-factor, corresponds to a perfect matching. Erd\H{o}s and R\'enyi
\cite{ER66} proved in 1966 that if $p=\frac{\log n + \om}{n}$, with
$\om\rai$ and $n$ even, then  the Erd\H{o}s-R\'enyi-Gilbert random graph,
$G_{n,p}$ has a perfect matching \tbf{whp}\footnote{An event $\scr{E}_n$
occurs \emph{with high probability}, or \tbf{whp}, if
$\lim_{n\rai}\Pr{\scr{E}_n} = 1$.}. In 1981, Shamir and Upfal \cite{SU81}
proved a general result which implies that if $p=\frac{\log n +
(r-1)\log\log n +\om}{n}$, with $\om\rai$ arbitrarily slowly, and $n$ even,
then $G_{n,p}$ contains $r$ edge-disjoint perfect matchings \tbf{whp}. In
this range of $p$, the minimum degree of $G_{n,p}$ is $r$ \tbf{whp}, so the
result is optimal.  \L uczak and Ruci\'nski \cite{LR91}, as a corollary of
a more technical result, proved that for any tree $T$, if $p=\frac{\log n +
\omega}{n}$ with $\om\rai$, and $n$ is divisible by $\abs{T}$, then
$G_{n,p}$ has a $T$-factor \tbf{whp}. As an analogue to the theorem of
Shamir and Upfal, Kurkowiak \cite{K99} proved that if $p=\frac{\log n +
(r-1)\log\log n +\om}{n}$ with $\om\rai$, then $G_{n,p}$ contains $r$
edge-disjoint $T$-factors \tbf{whp}.

The study of optimal and near-optimal packings of spanning objects in
graphs and hypergraphs is an area of much active research.  Recently, the
case of Hamilton cycles (simple spanning cycles) has been the subject of
many papers. When $p = \frac{\log n  + (2r-1)\log\log n +\omega}{n}$ where
$\omega\rai$, Bollob\'as and Frieze \cite{BF85} proved in 1985 that
$G_{n,p}$ contains $r$ edge-disjoint Hamilton cycles \tbf{whp}.  In
\cite{FK05}, Frieze and Krivelevich conjectured that for any $0 < p = p(n)
\le 1$, $G_{n,p}$ contains $\floor{\d/2}$ edge-disjoint Hamilton cycles
\tbf{whp}, where $\d$ represents the minimum degree. The conjecture was
solved in the series of papers \cite{FK08},\cite{KKOExact},\cite{KS11} and
\cite{KO12}. In intermediate papers such as \cite{FK05} and \cite{KKO12},
the notion of approximate or almost optimal packings was studied. The
results in these papers state that for certain ranges for $p$, all but a
vanishing fraction of the edges of $G_{n,p}$ can be covered with
edge-disjoint Hamilton cycles.

In this work, we investigate when all but a vanishing fraction of
the edges of random and pseudo-random graphs be covered with
edge-disjoint $T$-factors, for a fixed tree $T$.  We begin by
introducing the notion of pseudo-randomness which we will use in
this paper.
\begin{definition}
 Let $G=(V,E)$ be a graph with $n$ vertices. We say $G$ is
 $\boldsymbol{(\e,p)}$\textbf{-regular} if the following 2 conditions hold:
 \vspace{-.1in}
\begin{itemize}
 \item $d(v) \ge (1-\eps)np$ for every vertex $v$.
\vspace{-.1in}
 \item $d(u,v)\le (1+\eps)np^2$ for every pair of distinct vertices $u$ and $v$.
\end{itemize}
\end{definition}

\noindent Here, $d(v)$ denotes the degree of vertex $v$, and
$d(u,v)$ denotes the co-degree of $u$ and $v$, i.e., the number of
neighbors common to both $u$ and $v$. We will also write $d_S(v)$
and $d_S(v,w)$ to refer to the degree and the co-degree into a set
$S$ of vertices.  Our pseudo-randomness conditions are localized,
and this is in part necessary because we are packing spanning
structures. Nevertheless, these conditions are satisfied \tbf{whp}
by $G_{n,p}$, for appropriately chosen $\eps$. Indeed, by the
standard Chernoff bound, stated as Theorem \ref{chernoff} in the
next section:
\begin{align*}
  &\Pr{G_{n,p}\textrm{ is not }(\e,p)\textrm{-regular}}  \\
 &< n\Pr{\Bin\sqbs{n-1,p} < (1-\e)np} + n^2\Pr{\Bin\sqbs{n-2,p^2} > (1+\e)np^2} \\
 &= o(1) \,,
\end{align*}
as long as $\e^2 n p^2 \gg \log n$.  (In this paper, we will write $A_n \gg
B_n$ when $A_n / B_n \rai$ with $n$.)   Our theorem for \epr graphs is then
as follows.
\begin{maintheorem}\label{mainpseudo}
Let $T$ be a fixed tree with $t$ vertices, and let $G$ be an
$(\e,p)$-regular graph on $n$ vertices, with $n$ a multiple of $t$. If
$\e,n$ and $p$ satisfy $\e^{6}np^4 \gg \log^3n$ then for $n$ sufficiently
large, $G$ contains a collection of edge-disjoint $T$-factors covering all
but $2\e^{1/3}$-fraction of its edges.
\end{maintheorem}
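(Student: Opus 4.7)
The plan is to construct the packing iteratively: find a $T$-factor $F_1$ in $G$, delete its edges, find $F_2$ in $G\setminus F_1$, and continue until the remaining graph contains at most a $2\e^{1/3}$-fraction of the edges of $G$. This reduces the theorem to two ingredients: (i) a \emph{Tree Factor Lemma} producing a single $T$-factor in any $(\e',p')$-regular graph whose parameters satisfy $(\e')^2 n (p')^2 \gg \log n$ (with $\e'$ sufficiently small in terms of $t$); and (ii) a bookkeeping argument showing that the $(\e,p)$-regularity parameters of the leftover graph $G_i=G\setminus(F_1\cup\cdots\cup F_i)$ degrade slowly enough to sustain all $\Theta(np)$ applications of the Tree Factor Lemma.

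For the Tree Factor Lemma I would fix a leaf $\ell$ of $T$ with unique neighbor $\ell'$ and write $T' = T-\ell$. Stage A covers $V(G)$ by vertex-disjoint copies of $T'$: a randomized greedy embedding of $T'$-copies yields a near-$T'$-factor missing only a small set $U$, and a preselected family of absorbing gadgets is then used to reincorporate $U$ into the $T'$-packing. Stage B completes each $T'$-copy into a $T$-copy by attaching, to each copy, a still-unused external neighbor of its $\ell'$-vertex to serve as $\ell$. This is a perfect matching problem in a bipartite auxiliary graph between the family of $T'$-copies and the vertex set, and the Hall condition is verified by combining the minimum-degree bound (which gives enough candidate leaves per copy) with the co-degree bound (which precludes candidate sets from concentrating on a small ``popular'' set of vertices).

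For the iteration, the key point is to randomize the factor-finding procedure so that each vertex's role in each $F_i$ is, marginally, uniform over $V(T)$: the cleanest way to achieve this is to compose the Tree Factor Lemma at every round with an independent uniform random permutation of $V(G)$. At any fixed vertex $v$, the per-round edge loss is then a random variable in $\{0,1,\ldots,\Delta(T)\}$ with mean $2(t-1)/t$, and a Chernoff bound across the $k=\Theta(np)$ independent rounds shows that the total loss at $v$ concentrates around $2k(t-1)/t$ with deviation $O(\Delta(T)\sqrt{k\log n})$. An essentially identical argument controls the co-degree losses of every pair. Hence, writing $p_i = p - 2i(t-1)/(tn)$ for the effective density, \textbf{whp} the graph $G_i$ stays $(\e_i,p_i)$-regular throughout with $\e_i$ only mildly larger than $\e$; at the stopping time $k\approx (1-2\e^{1/3})\,npt/(2(t-1))$ we have $p_k\approx 2\e^{1/3}p$ and $\e_k=O(\e^{1/3})$, and one checks that the hypothesis $\e^6 np^4\gg\log^3n$ leaves a polynomial margin both for this parameter drift and for the union bound over all $O(np)$ rounds and all pairs of vertices.

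The chief obstacle is the Tree Factor Lemma itself, and in particular the Hall-type verification in Stage B: the minimum degree alone produces many candidate leaves for each $T'$-copy, but ruling out Hall violators requires the co-degree condition to show that no small vertex set serves as the candidate set of too many copies simultaneously. A secondary delicate point is that the absorption step in Stage A must be compatible with the iteration, so the absorbers in round $i$ must be edge-disjoint from those used earlier; I would choose them randomly inside each round and invoke the regularity hypothesis of $G_i$ (preserved by the bookkeeping above) to guarantee \textbf{whp} that enough live absorbers remain for every one of the $\Theta(np)$ rounds.
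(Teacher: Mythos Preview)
Your approach is genuinely different from the paper's, and it contains a real gap in the bookkeeping step.

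The paper does \emph{not} iterate a single-factor lemma. Instead it fixes $r=\Theta(\e^{-2}\log n)$ independent random permutations $\s_1,\dots,\s_r$ of $[n]$, uses each $\s_i$ to split $V$ into $t$ equal parts indexed by $V(T)$, and keeps only the edges of $G$ between parts corresponding to edges of $T$; this gives overlapping ``$T$-blow-ups'' $G_{\s_1},\dots,G_{\s_r}$. Every edge of $G$ lies in $(1\pm\e)\k$ of them, so assigning each edge to a uniformly chosen $G_{\s_i}$ that contains it yields edge-disjoint $(7\e,p/\k)$-regular $T$-blow-ups $\widehat G_1,\dots,\widehat G_r$. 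Each super-edge is then packed with perfect matchings via the Frieze--Krivelevich bipartite lemma, and a choice of one matching per super-edge is already a $T$-factor. No ``Tree Factor Lemma'' is needed, and no iterative degradation has to be tracked.

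The gap in your plan is the claim that composing the Tree Factor Lemma with a uniform random permutation of $V(G)$ forces each vertex's role in $F_i$ to be marginally uniform over $V(T)$. It does not. If $A$ denotes your (possibly randomized) factor-finder and you set $F_i=\pi^{-1}\bigl(A(\pi(G_i))\bigr)$, then the role of $v$ in $F_i$ equals the role of $\pi(v)$ in $A(\pi(G_i))$, and this is determined by the \emph{isomorphism type of the rooted graph} $(G_i,v)$, not by the label $\pi(v)$. Concretely, if $v$ is structurally distinguished in $G_i$ (say the unique vertex of maximum degree, which is generic in an $(\e,p)$-regular graph with nontrivial degree spread), any label-based relabeling still sends $v$ to the structurally distinguished vertex of $\pi(G_i)$, and $A$ may deterministically assign that vertex the same role in every round. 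For a star $T=K_{1,t-1}$ this would let some vertex lose $t-1$ edges per round rather than the average $2(t-1)/t<2$, so its degree drops far below $(1-\e_k)np_k$ long before you reach the target stopping time. The co-degree ledger has the same defect. In short, the permutation randomizes labels but not structure, so the Chernoff step has no footing.

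What you actually need to randomize is the \emph{partition} into $t$ classes (so that the role of $v$ is literally the random class it lands in), and once you do that you are essentially at the paper's construction: the randomness assigns roles uniformly by fiat, and then the existence of a compatible $T$-factor reduces to packing matchings across the super-edges. Your absorbing-plus-Hall route to a single $T$-factor is plausible on its own, but it is both harder than what is needed here and, as written, not symmetric enough to survive $\Theta(np)$ rounds of removal.
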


For random graphs, we have two results.  Let $\scr{P}(\e)$ be the graph
property that all but an $O(\e)$-fraction of the edges of a graph may be
covered by a collection of edge-disjoint $T$-factors. Direct application of
the pseudo-random result establishes that as long as the average degree
$np$ is above a certain power of $n$, random graphs $G_{n,p}$ can be almost
packed with tree-factors.  That initial range is not optimal, and resembles
the barrier which was hit during the investigation of Hamilton cycle
packing in random structures (see, e.g., \cite{BF12}, \cite{FK05}, and
\cite{FKL12}).  Using additional properties of $G_{n,p}$, we are able to
push the result to smaller $p$.

\begin{maintheorem}\label{randomthmlogsq}
  Let $T$ be a fixed tree with $t$ vertices. If $\e,n$, and $p$ satisfy
  $\e^4 np \gg \log^2 n$, then  $G_{n,p}$, with $n$ a multiple of $t$,
  satisfies $\scr{P}(\e)$ \tbf{whp}.
\end{maintheorem}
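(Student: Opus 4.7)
The principal difficulty in proving Theorem \ref{randomthmlogsq} is that Theorem \ref{mainpseudo} requires $\eps^6 np^4 \gg \log^3 n$, whereas Theorem \ref{randomthmlogsq} allows $p$ as small as $\log^2 n/(\eps^4 n)$. In this sparser regime we may have $np^2 \ll 1$, so the co-degree bound $d(u,v) \le (1+\eps)np^2$ in the definition of $(\eps,p)$-regularity becomes vacuous: typical pairs have zero common neighbours, and $G_{n,p}$ is not $(\eps,p)$-regular in a usable sense. My plan therefore exploits properties of $G_{n,p}$ that go beyond mere $(\eps,p)$-regularity.

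The strategy is an iterative $T$-factor extraction supported by a sprinkling argument. First couple $G_{n,p}$ as an edge-disjoint union $G_\textrm{work} \cup G_\textrm{res}$, where $G_\textrm{work} \sim G_{n,(1-\eps)p}$ carries most of the edges and $G_\textrm{res} \sim G_{n,p_\textrm{res}}$ with $p_\textrm{res} = \Theta(\eps p)$ is a reserve whose density is still well above the $T$-factor threshold $\log n/n$ of \cite{LR91}. Then pack $T$-factors $F_1,F_2,\ldots$ one at a time in the residual graph $R_j$, with $R_0 = G_\textrm{work}$ and $R_j = R_{j-1}\setminus F_j$. Each $F_j$ uses $n(t-1)/t$ edges, so after $M \approx (1-3\eps)\,|E(G_{n,p})|\cdot t/[n(t-1)] = \Theta(np)$ iterations only an $O(\eps)$-fraction of $|E(G_{n,p})|$ remains uncovered, which is exactly $\scr{P}(\eps)$.

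The central technical step is to prove that each extraction can be performed. Rather than appealing directly to Luczak--Rucinski, which is an existence result for $G_{n,q}$ and not for a deterministic residual graph, I would isolate a sufficient pseudo-random condition for $T$-factor existence that (i) holds for $G_{n,p}$ throughout the range $\eps^4 np \gg \log^2 n$, and (ii) is stable under the removal of $\Theta(np)$ edge-disjoint $T$-factors. A natural such condition would involve concentration of the number of embeddings of rooted subtrees of $T$ into vertex neighbourhoods, which for $G_{n,p}$ follows from Chernoff or Kim--Vu polynomial concentration at densities well below those required for the direct co-degree control used in Theorem \ref{mainpseudo}.

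The principal obstacle is then the joint analysis of the $M$ adversarial extractions: $R_j$ is not itself a random graph, and one must track the relevant degree and embedding-count statistics across all iterations simultaneously. I plan to handle this via a round-by-round concentration argument (for instance an Azuma-type martingale on tree counts) together with the reserve $G_\textrm{res}$, which can be used to ``top up'' any vertex whose degree in $R_j$ threatens to drop below the safe threshold. Provided the relevant statistics remain within $O(\eps)$ of their expected values throughout, the sufficient condition for $T$-factor existence continues to hold and the iteration proceeds to completion, yielding property $\scr{P}(\eps)$ \textbf{whp}.
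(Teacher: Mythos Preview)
Your proposal takes a genuinely different route from the paper, and it leaves the hardest steps unexecuted.

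The paper does not iterate single $T$-factor extractions at all. It reruns Procedure~\ref{proc1} verbatim on $G_{n,p}$ and observes that, because the input graph is itself random, each super-edge of each $\widehat{G}_i$ is a genuine random bipartite graph in which every pair is present independently with probability at least $q = p/((1+\eps)\kappa) \gg (\log n)/n$, not merely a pseudo-random one. This single observation lets the paper swap the pseudo-random matching-packing Lemma~\ref{packmatch:pseudo} (which is what forces the co-degree hypothesis and hence the $np^4$ barrier you correctly identified) for its random-bipartite analogue Lemma~\ref{packmatch:random}, which needs only $\nu q \gg \log \nu$. The remainder of the proof is word-for-word the proof of Theorem~\ref{mainpseudo}. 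No sprinkling, no iteration, no stability analysis is required; the condition $\eps^4 np \gg \log^2 n$ falls out directly from $q \gg (\log n)/n$ with $\kappa = \Theta(\eps^{-2}\log n)$.

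Your plan, by contrast, hinges on two substantial pieces that are only named: (i) a concrete deterministic sufficient condition for a single $T$-factor, phrased in terms of rooted-subtree embedding counts or similar, together with a proof that $G_{n,p}$ satisfies it in the stated range; and (ii) a proof that this condition survives the removal of $M = \Theta(np)$ $T$-factors, each of which is a function of the random graph and of all earlier rounds. Step (ii) is the real difficulty. The residual $R_j$ is not a product space, so Chernoff and Kim--Vu do not apply to it directly; one needs either a careful exposure martingale or a worst-case Lipschitz bound, and $M$ is large enough that a naive union bound over rounds is insufficient. The ``top-up'' from $G_{\mathrm{res}}$ is also delicate: a reserve of density $\Theta(\eps p)$ supplies only $\Theta(\eps np)$ edges per vertex, while over $\Theta(np)$ rounds a vertex could in principle require repair many times. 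Programmes of this shape have been carried out for Hamilton cycles, so the approach is not hopeless, but the unspecified steps constitute essentially all of the work, and the paper's argument sidesteps them entirely.
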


This range of $p$ ($\gg \frac{\log^2 n}{n}$) is still probably not optimal,
and in the context of Hamilton cycle packing, it took further developments
to remove the last logarithmic factors.  For tree-factor packing, however,
it turns out that we can circumvent this obstacle.  In the following
theorem, we improve the range of $p$ to asymptotically best possible,
subject to an additional divisibility condition on $n$, which we suspect to
be an artifact of our proof technique.
\begin{maintheorem}\label{randomthmlog}
  Given any $t$-vertex tree $T$ and any positive real $\epsilon$, there
  exists an integer $\tau_0$ such that for any $\tau \geq \tau_0$
  satisfying $t \mid \tau$, there is a real constant $C$ such that for $p >
  \frac{C \log n}{n}$, the random graph $G_{n,p}$ satisfies
  $\scr{P}(\epsilon)$ \tbf{whp} for $\tau \mid n$.
\end{maintheorem}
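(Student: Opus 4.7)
The plan is to improve Theorem~\ref{randomthmlogsq} down to the essentially optimal density $p = \Theta(\log n / n)$, which is the \L uczak--Ruci\'nski threshold for the existence of even a single $T$-factor in $G_{n,p}$. I would combine a two-round exposure of $G_{n,p}$ with an iterated greedy packing inside the bulk round and a block-absorbing cleanup that exploits the divisibility $\tau \mid n$. First, fix a small $\eta = \eta(\epsilon)>0$ and write $G_{n,p} = G_1 \cup G_2$ as a union of independent random graphs, with $G_1 = G_{n,p_1}$ where $p_1 = (1-\eta)p$ and $G_2 = G_{n,p_2}$ chosen so that the union has edge probability $p$. The graph $G_1$ hosts the bulk of the packing, and the sparser $G_2$ is held in reserve as a source of fresh randomness for the final cleanup. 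I would also partition $V$ into $N = n/\tau$ blocks of size $\tau$; since $t \mid \tau$, each block is able to accommodate a local $T$-matching of $\tau/t$ trees, which makes block-level switchings possible.

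For the bulk, I would build $T$-factors $F_1, \ldots, F_K$ in $G_1$ one at a time, where $K = (1-O(\eta)) np_1 / [2(1-1/t)]$. At round $k$, with residual graph $H_k = G_1 \setminus (F_1 \cup \cdots \cup F_{k-1})$, I would sample $F_k$ uniformly at random from a sufficiently rich family of $T$-factors of $H_k$. Using Azuma-type martingale concentration and a union bound over vertices and the $K = \Theta(\log n)$ rounds, I would show that every degree and co-degree in $H_k$ stays close to its expected value $(n-1)p_1 - O(k)$, so that $H_k$ remains pseudo-random throughout, and Kurkowiak's theorem (in a quantitative form) guarantees that the next $T$-factor exists. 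After $K$ rounds, only an $O(\eta)$ fraction of the edges in $G_1$ are left uncovered.

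The remaining $O(\epsilon)$ fraction of the edges of $G_{n,p}$ would then be cleaned up using $G_2$ together with the block partition. The idea is to pre-construct an auxiliary family of absorbing $T$-factors whose local structure within each block admits many switchings (pairs of $T$-packings on a single block $V_i$ differing in a bounded number of edges), so that, \whp, each small configuration of uncovered edges can be exchanged into the packing using switchings on the blocks that the uncovered edges meet; taking $\tau$ large depending on $\epsilon$ guarantees that enough such local alternatives exist in $G_2$. The main obstacle will be maintaining pseudo-randomness of $H_k$ through all $\Theta(\log n)$ greedy rounds: a purely adversarial deletion analysis costs a further $\log n$ factor, precisely the gap between Theorems~\ref{randomthmlogsq} and~\ref{randomthmlog}. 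To avoid this, the random $T$-factor drawn at each round must remove each edge of $H_k$ with nearly equal probability and with good concentration, which amounts to an edge-regularity property of the uniform distribution on $T$-factors of a pseudo-random graph; proving this is, in my view, the technical heart of the argument.
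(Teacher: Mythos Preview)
Your proposal takes a very different route from the paper, and it leaves the hardest step unresolved. You propose an iterated greedy packing of $\Theta(\log n)$ $T$-factors in $G_1$, maintained by showing the residual graph stays pseudo-random, followed by a block-absorbing cleanup via $G_2$. You correctly identify that the bottleneck is proving an \emph{edge-regularity} property for a uniformly random $T$-factor of a pseudo-random graph (so that deletions are nearly balanced), but you do not actually prove this; you only name it as ``the technical heart.'' Likewise, the absorbing step is only sketched: you assert that large enough $\tau$ guarantees enough local switching alternatives in $G_2$, but you give no construction of the absorbing family or argument that it can absorb the entire leftover. As it stands, the proposal is a plan with two substantial gaps rather than a proof.

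The paper sidesteps both difficulties with a far simpler argument that exploits the divisibility $\tau\mid n$ directly. Partition $V$ into $\tau$ blocks $V_1,\dots,V_\tau$ of size $\ell=n/\tau$ and regard the blocks as vertices of the complete graph $K_\tau$. Since $\tau\ge\tau_0\ge\epsilon^{-3}$, the graph $K_\tau$ is $(\epsilon^3,1)$-regular, so Theorem~\ref{mainpseudo} (applied to this \emph{constant-size} graph) covers all but an $O(\epsilon)$-fraction of the pairs $\{i,j\}$ by edge-disjoint $T$-factors of $K_\tau$. Each super-edge $(V_i,V_j)$ appearing in such a $T$-factor is a random bipartite graph $B_{\ell,\ell,p}$, to which Lemma~\ref{packmatch:random} applies directly since $p>\frac{C\log n}{n}=\frac{C\tau\log\ell}{\tau\ell}$ with $C=\tau\epsilon^{-2}$, giving $(1-O(\epsilon))\ell p$ edge-disjoint perfect matchings in each super-edge. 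Combining one matching per super-edge along each $T$-factor of $K_\tau$ produces a $T$-factor of $G$. The only losses are edges inside a block ($O(1/\tau)$), edges in pairs $(V_i,V_j)$ not covered by any $T$-factor of $K_\tau$ ($O(\epsilon)$), and edges missed by the bipartite matching lemma ($O(\epsilon)$). No iteration, no two-round exposure, no absorbing, and no control of the uniform distribution on $T$-factors is needed; the entire role of $\tau$ is to make $K_\tau$ pseudo-random enough for Theorem~\ref{mainpseudo}.
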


The complexity of the above result stems from the fact that it is stated in
greater generality.  Indeed, note that if one applies it with the
particular choice $\tau = \tau_0$, then the conclusion is that there is a
real $C_0$ such that $G_{n,p}$ satisfies $\scr{P}(\epsilon)$ \tbf{whp} for
$\tau_0 \mid n$, when $p > \frac{C_0 \log n}{n}$.  This is within a factor
($C_0$) of the best possible result, and the divisibility condition is also
off by a factor (ideally, it would only require $t \mid n$).  Although it
may be more challenging to eliminate $C_0$, we conjecture that perhaps it
may not be as difficult to relax the divisibility condition.
\begin{conjecture}
  Given any $t$-vertex tree $T$ and any positive real $\epsilon$, there
  exists a real constant $C$ such that for $p > \frac{C \log n}{n}$, the
  random graph $G_{n,p}$ satisfies $\scr{P}(\epsilon)$ \tbf{whp} for $t
  \mid n$.
\end{conjecture}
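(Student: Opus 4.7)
The plan is to reduce to Theorem \ref{randomthmlog} by absorbing the divisibility remainder. Let $\tau \geq \tau_0$ be the constant supplied by Theorem \ref{randomthmlog} applied to $T$ with error parameter $\epsilon/2$. Given $t \mid n$, write $n = a\tau + r$ with $0 \leq r < \tau$; since $t \mid \tau$, we have $t \mid r$. If $r = 0$, Theorem \ref{randomthmlog} applies directly, so assume $r > 0$. Designate a constant-sized set $A \cup S \subset V$ of $\tau$ vertices, with $|A| = r$ and $|S| = \tau - r$, so that $|V \setminus (A \cup S)| = a\tau$ is divisible by $\tau$.

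The approach is to apply Theorem \ref{randomthmlog} to $G[V \setminus (A \cup S)]$ (or a slight edge-perturbation thereof), obtaining edge-disjoint $T$-factors $F'_1, \dots, F'_M$ with $M = \Theta(np)$ that cover all but an $\epsilon/2$ fraction of the edges of $G[V \setminus (A \cup S)]$. Each $F'_i$ must then be extended to a spanning $T$-factor of $V$ by adjoining a \emph{completion} $C_i$ that covers $A \cup S$. Since $G[A \cup S]$ has only $O(1)$ edges but we need $M = \Theta(np)$ edge-disjoint completions, $C_i$ cannot lie purely in $G[A \cup S]$; rather, $C_i$ will consist of $T$-copies that bridge $A \cup S$ to vertices in $V \setminus (A \cup S)$. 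To reconcile this with $F'_i$ being a $T$-factor of $V \setminus (A \cup S)$, we excise a constant number of $T$-copies near $A \cup S$ from $F'_i$ to free up vertices that the completion can reuse. The bridging copies draw on cross-edges between $A \cup S$ and $V \setminus (A \cup S)$, of which there are roughly $\tau \cdot np$ whp at each vertex, enough to supply $M = \Theta(np)$ completions with $O(1)$ cross-edges each. The edges of $G$ incident to $A \cup S$ form an $O(1/n)$-fraction of $E(G)$, so they can be sacrificed in the accounting.

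The main obstacle is the interdependence between the completion-building and the application of Theorem \ref{randomthmlog}. On one hand, we must reserve cross-edges and excise $T$-copies from each $F'_i$ to build $C_i$; on the other hand, the $F'_i$ are outputs of Theorem \ref{randomthmlog} applied to a graph that depends on these choices. Untangling this circularity, for instance by an iterative construction in which one builds $C_i$ and locally adjusts $F'_i$ one factor at a time while maintaining edge-disjointness and the pseudo-randomness needed for Theorem \ref{randomthmlog}, is where the principal difficulty lies. A greedy argument should in principle succeed because the per-vertex edge budget spent on completions is $O(1)$ per factor and $O(np)$ in total, matching the $\Theta(np)$ degree available at each vertex of $A \cup S$; but carrying this out simultaneously across all $M$ factors, together with verifying that the surgically modified instance of Theorem \ref{randomthmlog} still applies, is a substantive technical task and is presumably why the authors leave this as a conjecture.
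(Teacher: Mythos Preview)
The statement you are addressing is stated in the paper as a \emph{conjecture}; the authors give no proof of it. So there is no paper proof to compare against.

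Your proposal is also not a proof, and you acknowledge as much: after outlining the plan, you name the ``main obstacle'' (the interdependence between building the completions $C_i$ and invoking Theorem~\ref{randomthmlog}) and conclude that resolving it ``is a substantive technical task and is presumably why the authors leave this as a conjecture.'' A write-up that ends by explaining why the statement remains open is a plan, not a proof.

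Two further comments. First, there is an arithmetic slip in your setup: with $n = a\tau + r$ and $0 < r < \tau$, removing a set of size $\tau$ leaves $(a-1)\tau + r$ vertices, not $a\tau$; this is not divisible by $\tau$. You should remove $r$ vertices (or more generally $r + k\tau$) instead. This is easily repaired. Second, the genuine difficulty is exactly where you locate it: you need $M = \Theta(np)$ edge-disjoint completions, each of which must (i) use $O(1)$ edges at each vertex of the removed set, (ii) free up and re-cover a compatible set of vertices from the corresponding $F'_i$, and (iii) be chosen so that the remaining graph on $V \setminus (A \cup S)$ still admits the packing from Theorem~\ref{randomthmlog}. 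The first-order edge count you give is consistent, but turning it into an argument would require either an absorber structure built \emph{before} applying Theorem~\ref{randomthmlog} (so that the input to that theorem is fixed), or a delicate iterative construction. Neither is supplied here, so the conjecture remains open on your account as well as the paper's.
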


Throughout our exposition, we will implicitly assume that $\epsilon$ is
sufficiently small and $n$ is sufficiently large.  The following (standard)
asymptotic notation will be utilized extensively.  For two functions $f(n)$
and $g(n)$, we write $f(n) = o(g(n))$ if $\lim_{n \rightarrow \infty}
f(n)/g(n) = 0$, and $f(n) = O(g(n))$ if there exists a constant $M$ such
that $|f(n)| \leq M|g(n)|$ for all sufficiently large $n$.  We also write
$f(n) = \Theta(g(n))$ if both $f(n) = O(g(n))$ and $g(n) = O(f(n))$ are
satisfied.  We also write $A=(1\pm\e)B$ to mean $(1-\e)B \le A \le
(1+\e)B.$ All logarithms will be in base $e \approx 2.718$.

\section{Concentration inequalities}

For the reader's convenience, we record in this section the two
large-deviation bounds which we will use in this paper.  We will appeal to
the following version of the Chernoff bound, which can be found, for
example, as Corollary 2.3 in the book by Janson, \L uczak, and Ruci\'nski
\cite{JLR00}.
\begin{theorem}\label{chernoff}
 Let $X$ be a binomial random variable with mean $\m$, and let $0 < \eps
 <1$. Then
 \[\Pr{\abs{X-\mu} >\eps\m} \le 2e^{-\e^2\m/3} \,.\]
\end{theorem}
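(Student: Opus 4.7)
The plan is to prove the Chernoff bound via the exponential moment (Chernoff--Cram\'er) method, treating the upper and lower tails separately and combining them with a union bound to produce the factor of $2$.

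First I would handle the upper tail $\Pr[X - \mu > \varepsilon \mu]$. Writing $X = \sum_{i=1}^n X_i$ as a sum of independent Bernoulli$(p)$ variables, I would apply Markov's inequality to $e^{tX}$ for an arbitrary parameter $t>0$:
\[
  \Pr[X > (1+\varepsilon)\mu] \;\le\; e^{-t(1+\varepsilon)\mu}\,\mathbb{E}[e^{tX}] \;=\; e^{-t(1+\varepsilon)\mu}\prod_{i=1}^n \bigl(1 - p + p e^{t}\bigr).
\]
Using the elementary inequality $1 + x \le e^x$ gives $\mathbb{E}[e^{tX}] \le e^{\mu(e^t - 1)}$, so the bound becomes $\exp\bigl(\mu(e^t - 1 - t(1+\varepsilon))\bigr)$. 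Optimizing in $t$ (the minimum occurs at $t = \ln(1+\varepsilon)$) yields the standard form
\[
  \Pr[X > (1+\varepsilon)\mu] \;\le\; \left(\frac{e^{\varepsilon}}{(1+\varepsilon)^{1+\varepsilon}}\right)^{\mu}.
\]

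Next I would convert this to the clean exponential form in the statement. The key analytic step is the inequality $(1+\varepsilon)\ln(1+\varepsilon) - \varepsilon \ge \varepsilon^2/3$, valid for $0 < \varepsilon < 1$, which I would verify by comparing Taylor expansions (or by checking that the difference is $0$ at $\varepsilon = 0$ and has nonnegative derivative on the relevant interval). Substituting gives $\Pr[X > (1+\varepsilon)\mu] \le e^{-\varepsilon^2 \mu/3}$.

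For the lower tail I would run the same argument with $t < 0$, applied to $e^{-tX}$, obtaining after the same optimization the bound $\Pr[X < (1-\varepsilon)\mu] \le \bigl(e^{-\varepsilon}(1-\varepsilon)^{-(1-\varepsilon)}\bigr)^{\mu}$, and then invoke the analogous (in fact slightly stronger) inequality $-\varepsilon - (1-\varepsilon)\ln(1-\varepsilon) \le -\varepsilon^2/2 \le -\varepsilon^2/3$ for $0 < \varepsilon < 1$. Adding the two tail estimates produces the factor of $2$ in the conclusion. The only mildly technical step is checking the two scalar inequalities used to pass from the exact $\bigl(\frac{e^{\pm\varepsilon}}{(1\pm\varepsilon)^{1\pm\varepsilon}}\bigr)^{\mu}$ expressions to $e^{-\varepsilon^2 \mu/3}$; everything else is a direct application of Markov's inequality and independence, so I expect no real obstacles.
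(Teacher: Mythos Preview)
Your argument is the standard and correct proof of this form of the Chernoff bound via the exponential moment method; the two scalar inequalities you isolate are exactly the ones needed, and both hold on $(0,1)$ as you indicate. There is nothing to compare against, however: the paper does not supply a proof of this statement at all but simply quotes it as Corollary~2.3 of Janson, {\L}uczak, and Ruci\'nski~\cite{JLR00}. So your proposal goes beyond what the paper does, rather than taking a different route to the same place.
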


The previous result establishes concentration of a random variable
defined over a product space.  In this paper, we will also encounter
a particular non-product space.  For that, we use the following
concentration bound which applies in the setting where the
probability space is the uniform distribution over permutations of
$n$ elements. For a proof, we refer the reader to \cite{FP04} or
\cite{McD98}.

\begin{theorem}\label{permconc}
  Let $X$ be a random variable determined by a uniformly random permutation
  on $n$ elements, and let $C$ be a real number.  Suppose that whenever
  $\s, \s' \in S_n$ differ by a single transposition, $\abs{X(\s) - X(\s')}
  \le C$. Then,
\[\P{\abs{X-\E{X}} \ge t} \le 2\exp\set{-\frac{2t^2}{C^2n}}.\]
\end{theorem}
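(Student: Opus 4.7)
The plan is to set up a Doob martingale and apply the Azuma--Hoeffding bounded-differences inequality, which is the standard route for McDiarmid-type bounds on the symmetric group. Expose the uniformly random permutation $\s \in S_n$ one coordinate at a time and define $M_i = \E{X \mid \s(1),\ldots,\s(i)}$ for $i=0,1,\ldots,n$, so that $M_0 = \E{X}$ and $M_n = X$. To land on the claimed exponent $-2t^2/(C^2 n)$ rather than the weaker $-t^2/(2C^2 n)$ delivered by a naive Azuma bound with $|M_i - M_{i-1}| \le C$, I would prove the stronger range estimate that, conditional on the history $\s(1),\ldots,\s(i-1)$, the increment $M_i - M_{i-1}$ takes values in an interval of length at most $C$ (not $2C$). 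Hoeffding's inequality for martingales with such increments then gives exactly $2\exp\set{-2t^2/\sum_{i=1}^n C^2}$.

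The core step is bounding the range of the increment. Fix $i$ and the history $\s(1),\ldots,\s(i-1)$, and let $R$ be the set of values not yet used. For each $k \in R$, put $f(k) := \E{X \mid \s(i)=k, \text{history}}$; then $M_{i-1}$ is the uniform average of $f$ over $R$, while $M_i = f(\s(i))$. It therefore suffices to show $|f(k)-f(k')| \le C$ for all $k, k' \in R$. I would prove this via a coupling: given a completion $\s$ with $\s(i)=k$, locate the unique $j>i$ with $\s(j)=k'$ and let $\s'$ be the permutation obtained by swapping the values at positions $i$ and $j$. The map $\s \mapsto \s'$ is an involution, fixes $\s(1),\ldots,\s(i-1)$, and is a bijection between the uniform conditional distributions on $\set{\s(i)=k}$ and $\set{\s(i)=k'}$. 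Since $\s$ and $\s'$ differ by a single transposition, the hypothesis gives $|X(\s)-X(\s')| \le C$, and averaging under the coupling yields $|f(k)-f(k')| \le C$.

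Combining these steps, Hoeffding's inequality delivers
\[
\Pr{|X-\E{X}| \ge t} \;\le\; 2\exp\set{-\frac{2t^2}{\sum_{i=1}^n C^2}} \;=\; 2\exp\set{-\frac{2t^2}{C^2 n}},
\]
as stated. The only modest obstacle is verifying that the value-swap preserves the uniform conditional distributions; this reduces to the trivial fact that conditioning a uniform permutation on prescribed values at prescribed positions leaves the remaining assignment uniform, so the swap implements the required measure-preserving bijection. Everything else is a textbook application of the bounded-differences method, and the proof mirrors the argument of McDiarmid \cite{McD98}.
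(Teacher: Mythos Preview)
Your proposal is correct and is precisely the standard argument that the paper defers to: the paper does not prove this statement itself but refers the reader to McDiarmid \cite{McD98} (and to \cite{FP04}), where exactly this Doob-martingale-plus-coupling argument appears. In particular, your key observation that the conditional range of each increment is at most $C$ (via the value-swap bijection between completions) is what distinguishes the proof from a naive Azuma application and yields the sharp constant $2$ in the exponent.
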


\section{Proof of Theorem \ref{mainpseudo}}

Let $G=(V,E)$ be a graph on vertex set $V=[n]$, and suppose $t$ divides $n$
with $\nu=n/t$. Let $T=(V_T,E_T)$ be a fixed tree on vertex set $V_T=[t]$.
Let $\s$ be a permutation of $[n]$. Let $G_\s=(V_\s,E_\s)$ be the
$t$-partite subgraph of $G$ with vertex set
\[V_\s = V_{\s,1}\, \dot{\cup}\, V_{\s,2}\, \dot{\cup} \cdots \dot{\cup} \,V_{\s,t}\]
where $V_{\s,i} = \set{\s((i-1)\nu+1),\ldots,\s(i\nu)}$ for $i=1,\ldots,t.$ The edge set is defined as
\[E_\s = \set{(u,v)\in E\,:\,\exists i,j\in[t],\, \text{ with }(i,j)\in E_T \textrm{ and }u\in V_{\s,i}, v\in V_{\s,j}}.\]
In words, we use $\s$ to define a partition of $V$ into $t$ parts,
corresponding to the vertices of $T$, and we keep the edges of $G$
which connect two parts corresponding to the endpoints of an edge of
$T$. We also separately define $G_\s'$ as the subgraph of $G$ where
we keep edges between all pairs $(\vsi,\vsj)$, but still delete the
edges within the $\vsi$.

The resulting $G_\s$ looks like a ``blown-up'' version of $T$.  We call a
pair $(V_{\s,i},V_{\s,j})$ a \emph{super-edge} if $(i,j)\in E_T$, and we
say that it is an \emph{$(\e,p)$-regular pair} if
\begin{itemize}
 \item for all $v\in\vsi$ and $w\in\vsj$, $d_{\vsj}(v),d_{\vsi}(w) \ge
   (1-\e)\nu p$, and
 \item for all $v,w \in \vsi$ and $v',w'\in\vsj$, $d_{\vsj}(v,w), d_{\vsi}(v',w') \le (1+\e)\nu p^2.$
\end{itemize}

We say that $G_\s$ is an \emph{$(\e,p)$-regular blow up of $T$} if every
super-edge is an $(\e,p)$-regular pair.  Conveniently, if we take an
$(\e,p)$-regular graph $G$, and uniformly sample a random permutation $\s$
of $[n]$, then we typically preserve the regularity across super-edges in
$G_\s$.  Formally, we have:
\begin{lemma}\label{Gsigmaisablowup}
Let $G$ be an $(\e,p)$-regular graph on $n$ vertices with $n$ divisible by
$t$ and $\nu=n/t$. Suppose that $\e^2np^4\gg \log n.$  Let $\s$ be a
uniformly random permutation on $n$ elements, and define $G_\s$ as
above. Then with probability $1-o(n^{-1})$, $G_\s$ is a $(2\e,p)$-regular
blow up of $T$.
\end{lemma}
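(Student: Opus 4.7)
The plan is to apply the permutation concentration inequality, Theorem \ref{permconc}, to the random variables $X_{v,j} := |N(v) \cap V_{\s, j}|$ (governing the degree condition) and $Y_{v,w,j} := |N(v) \cap N(w) \cap V_{\s, j}|$ (governing the co-degree condition), and then to union bound over all $v, w \in V$ and $j \in [t]$. Since $(2\e,p)$-regularity of a pair demands only a lower bound on degrees and an upper bound on co-degrees, only one tail of each concentration bound is needed.

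The expected values follow immediately from the fact that $V_{\s, j}$ is distributed as a uniform random $\nu$-subset of $[n]$: one has $\E{X_{v,j}} = d(v)\nu/n \geq (1-\e)\nu p$ and $\E{Y_{v,w,j}} = d(v,w)\nu/n \leq (1+\e)\nu p^2$, directly from the hypotheses on $G$. The Lipschitz constant in Theorem \ref{permconc} is $C=1$: interchanging $\s(k)$ and $\s(l)$ alters $V_{\s, j}$ only when exactly one of the positions $k, l$ lies in the block $\{(j-1)\nu+1, \ldots, j\nu\}$, and in that case one element is added and one is removed, so both $X_{v,j}$ and $Y_{v,w,j}$ can shift by at most one. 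Applying Theorem \ref{permconc} with deviation thresholds $\e \nu p$ and $\e \nu p^2$ respectively then yields tail probabilities bounded by $\exp(-\Omega(\e^2 n p^2))$ and $\exp(-\Omega(\e^2 n p^4))$.

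Since $t$ is a fixed constant, the union bound runs over $O(n^2)$ events, and the binding constraint is the co-degree tail: the total failure probability is at most $O(n^2)\exp(-\Omega(\e^2 n p^4))$, which is $o(n^{-1})$ under the hypothesis $\e^2 n p^4 \gg \log n$. The only mildly delicate step is verifying the Lipschitz bound $C=1$; everything else reduces to a short expectation computation and a clean union bound. The appearance of $p^4$ in the hypothesis is naturally explained by the co-degree tail, whose mean scales with $p^2$ and whose square enters the concentration exponent.
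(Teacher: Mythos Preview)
Your proposal is correct and follows essentially the same approach as the paper: apply Theorem~\ref{permconc} with Lipschitz constant $C=1$ to the degree and co-degree counts into $V_{\s,j}$, then union-bound over $O(n^2)$ events, with the co-degree tail supplying the binding condition $\e^2 n p^4 \gg \log n$. The only cosmetic difference is that the paper first exposes the position of $v$ (respectively $v,w$) under $\s$ and then applies the concentration inequality to the residual permutation on $n-1$ (respectively $n-2$) elements, whereas you work directly with the full permutation; both routes give the same bounds.
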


\begin{proof}
 We will show that all pairs $(\vsi,\vsj)$ in $G_\s'$ are $(2\e,p)$-regular
 pairs, which obviously implies the result for  $G_\s$ since $G_\s$ and
 $G_\s'$ agree on super-edges.  We first show that all degrees are typically correct.
 Let $v$ be an arbitrary vertex and expose only the position of $v$ under
 $\s$. Suppose this reveals that $v\in\vsi$. Consider the pair
 $(\vsi,\vsj)$ in $G_\s'$ for any $j\neq i$.  Let $N_v := d_{\vsj}(v)$ and
 note that this is a random variable whose randomness comes from the
 permutation $\s$. Conditioned on the position of $v$, $\s$ is a uniform
 random permutation on the $n-1$ remaining vertices and so every other
 vertex has probability $\frac{n/t}{n-1}=\frac{\nu}{n-1}$ of being in
 $\vsj.$  We also know that $d_G(v) \ge (1-\e)np$ by $(\e,p)$-regularity,
 so $\E{N_v} \ge (1 - 1.5\e)\nu p$.

For concentration we apply Theorem \ref{permconc} to $N_v$. Note that transposing two elements of $\s$ can only change $N_v$ by at most $1$. So the probability that $N_v$ differs from its mean by more than $.5\e\nu p$ is bounded above by
\[2\exp\set{-\frac{2\of{.5\e\nu p}^2}{n-1}} = o(n^{-K})\]
for any positive constant $K$ as long as $\e^2 n p^2 \gg \log n$.
So taking a union bound over all vertices and choices of $j$ for $\vs{j}$, we have the degree conclusion of the lemma.

For co-degrees, we proceed similarly.  Let $v$ and $w$ be arbitrary
vertices and expose the positions of these two vertices under $\s$.
Suppose this reveals that $v\in \vsi$ and $w\in \vsj$. Let $k\in[t]$
be distinct from $i$ and $j$. Note that we are really only concerned
with the case when $i=j$ and $(\vsi, \vs{k})$ is a super-edge, but
this does not matter much. Let $N_{v,w}$ be the co-degree of $v$ and
$w$ into $\vs{k}$ in $G_\s'.$  Conditioned on the positions of $v$
and $w$, $\s$ is a uniform random permutation on the $n-2$ remaining
vertices, so every other vertex has probability $\frac{\nu}{n-2}$ of
being in $\vs{k}$. Also $d_G(v,w)\le(1+\e)np^2$ by
$(\e,p)$-regularity, so $\E{N_{v,w}} \le (1+ 1.5\e)\nu p^2$.

Applying Theorem \ref{permconc}, we have $N_{v,w} \le (1+ 2\e)\nu p^2$ with probability at least $1-o(n^{-K})$ for any positive constant $K$ as long as $\e^2np^4\gg \log n.$ Taking a union bound over pairs of vertices and choices of $k$ for $\vs{k}$, we have the co-degree conclusion of the lemma.
\end{proof}

We now define a procedure for generating edge-disjoint subgraphs of an \epr
graph $G$, each of which looks something like a $G_\s$.

\begin{procedure}\label{proc1}
This procedure takes as input a graph $G=(V,E)$ on $n$ vertices with
$n$ divisible by $t$. Let \[r = \frac{30}{\e^2}\frac{t^2}{t-1}\log
n,\] and perform the following steps.
\begin{enumerate}
\item[\tbf{P1}] Generate $r$ independent uniformly random permutations $\s_1,\ldots,\s_r$ of
  $[n]$. Construct $G_1=G_{\s_1},\ldots,G_r=G_{\s_r}$
  as described above.
\item[\tbf{P2}] For each edge $e\in E$, let $L_e=\set{i\,:\,e\in G_i}$. If
  $L_e \neq \es$, select a uniformly random element $i$ of $L_e$, and label
  $e$ with $i$.
\item[\tbf{P3}] Let $\gh{i} = (\widehat{V}_i, \widehat{E}_i)$ be the subgraph of $G_i$ consisting of all edges which received label $i$.
\end{enumerate}
\end{procedure}

Note that the $\gh{i}$'s are now edge-disjoint by construction. Our goal
will be to prove that the $\gh{i}$'s have regularity properties similar to
those of the $G_i$'s, but with larger $\e$ and smaller $p$.

\begin{lemma}\label{everyEdgeIn}
 Run Procedure \ref{proc1} on an \epr graph $G$ with $n$ vertices, where
 $n$ is divisible by $t$. Then with probability $1-o(n^{-1})$, each edge
 $e\in G$ appears in $(1\pm\e)\k$ of the $G_i$'s, where
 \[\k=\frac{60}{\e^2}\log n = \frac{2(t-1)}{t^2}r.\]
\end{lemma}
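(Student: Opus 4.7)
The plan is to reduce the lemma to a standard Chernoff estimate applied independently across the $r$ permutations. Fix any edge $e=(u,v)\in E(G)$. For a single uniformly random permutation $\sigma$, the edge $e$ lies in $G_\sigma$ precisely when $\{u,v\}$ is split across some super-edge $\{\vs{a}, \vs{b}\}$ with $(a,b)\in E_T$. For a fixed such pair, the probability that $\{u,v\}$ maps to $\{\vs{a}, \vs{b}\}$ is $2\cdot\frac{\nu}{n}\cdot\frac{\nu}{n-1}=\frac{2\nu}{t(n-1)}$, and since the $t-1$ events indexed by $E_T$ are disjoint, summing gives
$$\mathbb{P}[e\in G_\sigma] \;=\; \frac{2(t-1)\nu}{t(n-1)} \;=\; \frac{2(t-1)}{t^2}\cdot\frac{n}{n-1} \;=\; (1+o(1))\cdot\frac{\kappa}{r}.$$

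Next, since $\sigma_1,\ldots,\sigma_r$ are mutually independent, the indicator variables $\{\mathbf{1}[e\in G_i]\}_{i=1}^r$ are independent, and hence $|L_e|=|\{i : e\in G_i\}|$ is a binomial random variable with mean $\mu_e=(1+o(1))\kappa$. I would then apply the Chernoff bound (Theorem~\ref{chernoff}) with deviation parameter $\epsilon/2$. Choosing $n$ large enough that the $(1+o(1))$ factor fits inside the slack between $(1\pm\epsilon/2)\mu_e$ and $(1\pm\epsilon)\kappa$, this yields
$$\mathbb{P}\bigl[|L_e|\notin (1\pm\epsilon)\kappa\bigr] \;\le\; \mathbb{P}\bigl[\,\bigl||L_e|-\mu_e\bigr|>(\epsilon/2)\mu_e\,\bigr] \;\le\; 2\exp\!\left(-\frac{(\epsilon/2)^2\mu_e}{3}\right) \;\le\; 2\exp(-4\log n),$$
after plugging in $\mu_e=(1+o(1))\cdot\frac{60}{\epsilon^2}\log n$ and absorbing the $(1+o(1))$ factor into the exponent for large $n$. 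A union bound over the at most $\binom{n}{2}$ edges of $G$ then gives the overall failure probability $o(n^{-1})$ demanded by the lemma.

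There is really no obstacle here: once one observes that the events across distinct $\sigma_i$ are independent, the proof collapses to a textbook Chernoff computation. The only mild care needed is in the per-permutation probability calculation, so that one verifies $\mathbb{E}[|L_e|]$ agrees with $\kappa$ up to a $(1+O(1/n))$ factor (this is how the constant $\frac{2(t-1)}{t^2}$ in the definition of $r$ was chosen), and in checking that $\kappa=60\epsilon^{-2}\log n$ is large enough to make the Chernoff deviation clear $\log n$ by a multiplicative constant $\ge 2$, so that the $\binom{n}{2}$ union bound leaves polynomial room to spare.
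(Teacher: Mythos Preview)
Your proposal is correct and follows essentially the same approach as the paper: compute the per-permutation probability that a fixed edge lands in a super-edge (obtaining $\frac{2(t-1)}{t^2}\cdot\frac{n}{n-1}$), use independence of the $\sigma_i$ to get a binomial count with mean $(1+o(1))\kappa$, apply Chernoff with deviation $\tfrac{\e}{2}$, and take a union bound over edges. The paper's argument is identical in structure and in the numerical details.
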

\begin{proof}
 We first compute the probability $q$ that an edge $e=(v,w)$ appears in
 $G_\s$, when $\s$ is a uniformly random permutation. Then if we let $X_e$
 be the random variable counting the number of $G_i$'s which contain
 $e$, by the independence of the permutations $\s_1,\ldots,\s_r$, we have that
 $X_e$ is distributed as $\Bin[r,q]$.

 The edge $e$ appears in $G_\s$ if and only if $e$ is part of a super-edge.
 Let $(i,j)$ be a fixed edge of $T$. The probability that $e$ crosses
 $(\vsi,\vsj)$ is $\frac{2}{t^2}\frac{n}{n-1}$. To see this, expose the
 position of $v$ under $\s$.  Then $v$ lies in $\vsi$ or $\vsj$ with
 probability $2\cdot\frac{n/t}{n}$. Conditioning, on this, the probability
 that $w$ lands in the other set is $\frac{n/t}{n-1}$. There are $t-1$
 edges of $T$, and the events corresponding to $e$ belonging to different
 super-edges are mutually disjoint. So we have
 \[q = \frac{2(t-1)}{t^2}\of{1+\frac{1}{n-1}}\,,\]
 which implies that
 \[\E{X_e} = rq=\frac{60}{\e^2}\log n\of{1+\frac{1}{n-1}}\,,\]
 and by
Theorem \ref{chernoff}, the probability that $X_e$ differs from its mean by more than $.5\e rq$
is bounded above by
\[\exp\set{-\frac{(.5\e)^2}{3}rq} = o(n^{-3}).\]
Hence with probability at least $1-o(n^{-1})$, every edge is contained in $(1\pm\e)\frac{60}{\e^2}\log n$ of the $G_i$'s.
\end{proof}

\begin{lemma}\label{ghatsaregood}
 Suppose that $\e^6np^4\gg\log^3n$. Then after Step $\tbf{P3}$ of Procedure \ref{proc1}, with probability $1-o(1)$, each $\gh{i}$ is a $(7\e,\frac{p}{\k})$-regular blow up of $T$.
\end{lemma}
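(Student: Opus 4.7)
The plan is to condition on the ``good'' permutation outcomes provided by Lemmas \ref{Gsigmaisablowup} and \ref{everyEdgeIn}, and then exploit the fact that, given the permutations $\s_1,\ldots,\s_r$, Step $\tbf{P2}$ assigns labels to distinct edges independently. Applying Lemma \ref{Gsigmaisablowup} to each permutation and taking a union bound over the $r=O(\e^{-2}\log n)$ of them yields that every $G_i$ is simultaneously a $(2\e,p)$-regular blow-up of $T$ with failure probability $r\cdot o(n^{-1})=o(1)$. Combined with Lemma \ref{everyEdgeIn}, we may therefore fix an outcome of the permutations inside the event that every $G_i$ is a $(2\e,p)$-regular blow-up and every edge $e\in G$ satisfies $|L_e|\in[(1-\e)\k,(1+\e)\k]$.

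Given this conditioning, fix $i\in[r]$, a super-edge $(a,b)\in E_T$, and $v\in V_{\s_i,a}$. The set $N:=N_{G_i}(v)\cap V_{\s_i,b}$ has $|N|=(1\pm 2\e)\n p$, and for each $u\in N$ the indicator $X_u$ that $(v,u)$ receives label $i$ is a Bernoulli with parameter $1/|L_{(v,u)}|=(1\pm O(\e))/\k$. Crucially, the $\{X_u\}_{u\in N}$ are mutually independent because Step $\tbf{P2}$ labels distinct edges independently, so $d_{V_{\s_i,b}}^{\gh{i}}(v)=\sum_{u\in N}X_u$ is a sum of independent Bernoullis with mean $(1\pm O(\e))\n p/\k$. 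Theorem \ref{chernoff} gives relative deviation at most $\e$ except with probability $\exp\{-\Omega(\e^2\n p/\k)\}=\exp\{-\Omega(\e^4 np/(t\log n))\}$; since $p\le 1$, the hypothesis $\e^6 np^4\gg\log^3 n$ implies $\e^4 np\gg\log^2 n$, making this $o(n^{-K})$ for every constant $K$. In particular $d_{V_{\s_i,b}}^{\gh{i}}(v)\ge(1-7\e)\n p/\k$.

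The co-degree argument is analogous. For distinct $v,w\in V_{\s_i,a}$ let $M:=N_{G_i}(v)\cap N_{G_i}(w)\cap V_{\s_i,b}$, so $|M|\le(1+2\e)\n p^2$; for each $u\in M$ the edges $(v,u),(w,u)$ are distinct, so the event that both receive label $i$ is a Bernoulli with parameter $1/(|L_{(v,u)}|\cdot|L_{(w,u)}|)\le((1-\e)\k)^{-2}$, and these Bernoullis are independent across $u\in M$. Therefore $d_{V_{\s_i,b}}^{\gh{i}}(v,w)$ is a sum of independent Bernoullis with mean at most $(1+O(\e))\n(p/\k)^2$, and Chernoff's upper tail gives $d_{V_{\s_i,b}}^{\gh{i}}(v,w)\le(1+7\e)\n(p/\k)^2$ except with probability $\exp\{-\Omega(\e^2\n(p/\k)^2)\}=\exp\{-\Omega(\e^6 np^2/(t\log^2 n))\}$, which is again $o(n^{-K})$ under the hypothesis. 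A union bound over the $O(rnt)$ degree conditions and $O(rn^2 t)$ co-degree conditions finishes the proof. The main obstacle is simply verifying that $\e^6 np^4\gg\log^3 n$ is precisely the condition that keeps the smaller co-degree mean $\n(p/\k)^2=\Theta(\e^4 np^2/\log^2 n)$ large enough for Chernoff; once the two-stage randomness is isolated and the product structure of the labels is in place, everything else is a standard Chernoff bookkeeping calculation.
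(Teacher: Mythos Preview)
Your proof is correct and follows essentially the same approach as the paper: condition on the good permutation outcomes from Lemmas~\ref{Gsigmaisablowup} and~\ref{everyEdgeIn}, then exploit the independence of the edge-labeling in Step~\tbf{P2} to apply Chernoff to the degrees and co-degrees across each super-edge, and finish with a union bound. The only cosmetic difference is that the paper phrases the Chernoff step via stochastic domination by a true binomial (since Theorem~\ref{chernoff} is stated only for binomials), whereas you apply Chernoff directly to the sum of heterogeneous independent Bernoullis; this is of course equivalent.
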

\begin{proof}
  Since $r=\Theta(\log n/\e^2) = o(n)$, by our assumptions on $n,p$ and
  $\e$, we may assume that at the beginning of step
  \tbf{P3}, each of $G_1,\ldots,G_r$ is a $(2\e,p)$-regular blow up of $T$
  and each edge of $G$ appears in $(1\pm\e)\k$ of the $G_i$'s.

  Now consider a single $\gh{i}.$ We will show that with sufficiently high
  probability, this is a $(7\e,\frac{p}{\k})$-regular blow up of $T$. This
  means we must show that across super-edges, all degrees and co-degrees
  are very typically correct.  The super-edges of $\gh{i}$ are the same as those in $G_i$,
  but the edge density is lower by a factor of approximately $\k$ since
  each edge of $G_i$ chose to be in $\gh{i}$ with probability approximately
  $1/\k$.

  Let $v$ be an arbitrary vertex of $\gh{i}$, and let $\nhv$ represent the
  degree of $v$ across a super-edge in $\gh{i}$. If we let $Z_v$ represent
  the degree of $v$ across this super-edge in $G_i$, then since $G_i$ is a
  $(2\e,p)$-regular blow up of $T$, we have that $Z_v \ge (1 -2\e)\nu p$.
  Each of these $Z_v$ edges is included in $\gh{i}$ independently with
  probability $\frac{1}{(1\pm\e)\k}$. So $\nhv$ stochastically dominates
  the distribution
  \[\Bin\sqbs{Z_v, \frac{1}{(1+\e)\k}}.\]
  Thus
  \[\E{\nhv} \ge (1 - 4\e)\nu \frac{p}{\k},\]
  and Theorem \ref{chernoff} tells us that
  \[\nhv \ge (1 - 6\e)\nu \frac{p}{\k}\]
  with probability $1-o(n^{-K})$ for any positive constant $K$ as long as
  \[\e^2n\frac{p}{\k} \gg \log n \iff \e^4np \gg \log^2n. \]

  \newcommand{\nhvw}{\widehat{N}_{v,w}}

  Now for co-degrees, we let $v,w$ be vertices in $\gh{i}$ and let $\nhvw$
  represent the co-degree of $v$ and $w$ across a super-edge in $\gh{i}$.
  We let $Z_{v,w}$ represent the co-degree of $v$ and $w$ across this
  super-edge in $G_i$. Applying $(2\e,p)$-regularity of $G_i$, we have that
  $Z_{v,w} \le (1+2\e)\nu p^2$.  Each of these vertices remains a common
  neighbor of $v$ and $w$ in $\gh{i}$ with probability
  $\bfrac{1}{(1\pm\e)\k}^2$, so $\nhvw$ is stochastically dominated by
  \[\Bin\sqbs{Z_{v,w}, \bfrac{1}{(1-\e)\k}^2 }.\]
  Thus
  \[\E{\nhvw} \le (1+ 5\e)\nu \bfrac{p}{\k}^2\]
  and Theorem \ref{chernoff} gives that
  \[\nhvw  \le  (1 + 7\e)\nu\bfrac{p}{\k}^2\]
  with probability $1-o(n^{-K})$ for any positive constant $K$ as long as
  \[\e^2n\bfrac{p}{\k}^2 \gg \log n \iff \e^6np^2\gg \log^3 n.\]
  Taking a union bound over choices of vertices, super-edges and $\gh{i}$,
  we conclude that with probability $1-o(1)$ each $\gh{i}$ is a $(7\e,\frac{p}{\k})$-regular blow-up
  of $T$.
\end{proof}

To prove Theorem \ref{mainpseudo}, we will apply the following result of
Frieze and Krivelevich, which shows that any $(\e,p)$-regular pair can have
almost all of its edges covered by edge-disjoint perfect matchings.

\begin{lemma}[Frieze, Krivelevich \cite{FK12}]\label{packmatch:pseudo}
 Suppose $(A,B)$ is an $(\eta,d)$-regular pair with $\abs{A}=\abs{B}=\nu$  and $\eta^{4/3}d^2\nu \gg 1$ for some small value $\eta\ll 1$. Then $(A,B)$
contains a collection of $(1-\eta^{1/3})d\nu$ edge-disjoint perfect matchings.
\end{lemma}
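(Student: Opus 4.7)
The plan is to iteratively extract perfect matchings from $(A,B)$, certifying each one via Hall's theorem applied to the current residual bipartite graph $G^{(i)}$. Because every perfect matching contributes exactly one edge at each vertex, the minimum degree of $G^{(i)}$ is at least $(1-\eta)d\nu - i$, which stays above $(\eta^{1/3}-\eta)d\nu$ for every $i < (1-\eta^{1/3})d\nu$. Edge removal only decreases co-degrees, so the upper bound $d(v,w) \leq (1+\eta)\nu d^2$ is inherited by every $G^{(i)}$ -- both for pairs in $A$ (common $B$-neighbors) and, by the symmetry built into $(\eta,d)$-regularity, for pairs in $B$.

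The substance of the proof is verifying Hall's condition $|N_{G^{(i)}}(S)| \geq |S|$ for every $S \subseteq A$. The easy regimes are $|S| \leq \delta_i$, where the neighborhood of any single vertex of $S$ already covers the deficit, and $|S| > \nu - \delta_i$, where any $b \in B \setminus N(S)$ would need its at-least-$\delta_i$ neighbors to lie in $A \setminus S$, forcing $N(S) = B$. For intermediate sizes I would apply Cauchy--Schwarz to the degree sequence restricted to $S$:
\[
|N(S)| \;\geq\; \frac{\brac{\sum_{s \in S} d^{(i)}(s)}^{2}}{\sum_{s, s' \in S} d^{(i)}(s, s')} \;\gtrsim\; \frac{|S|^{2}\delta_{i}^{2}}{|S|^{2}(1+\eta)\nu d^{2} + |S|\nu} \;\gtrsim\; \eta^{2/3}\nu,
\]
where the hypothesis $\eta^{4/3}d^{2}\nu \gg 1$ is exactly what makes the quadratic co-degree term dominate the diagonal contribution. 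Applying the same inequality symmetrically to $T := B \setminus N(S)$ gives $|N(T)| \gtrsim \eta^{2/3}\nu$, and because $N(T) \subseteq A \setminus S$ this forces $|S| \leq (1 - \eta^{2/3})\nu$.

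Once Hall is established in $G^{(i)}$, a perfect matching exists and can be removed. The process iterates for $(1-\eta^{1/3})d\nu$ steps, after which the residual minimum degree $\delta_i$ has dropped to roughly $\eta^{1/3}d\nu$ and the Cauchy--Schwarz estimate above no longer exceeds $|S|$ uniformly in the intermediate range; this is precisely the origin of the coefficient $(1-\eta^{1/3})$ in the conclusion.

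The main obstacle I anticipate is the central band of the Hall analysis, where $|S|$ lies between $\eta^{2/3}\nu$ and $(1 - \eta^{2/3})\nu$: the Cauchy--Schwarz bound $|N(S)| \gtrsim \eta^{2/3}\nu$ is uniform in $|S|$ and therefore does not immediately give $|N(S)| \geq |S|$ once $|S|$ itself passes that threshold. Closing this gap is the technical heart of the argument, and it relies on combining the two symmetric Cauchy--Schwarz estimates (on $S$ and on $T$) together with careful bookkeeping of the residual degrees and co-degrees. The quantitative relationship between the hypothesis $\eta^{4/3}d^{2}\nu \gg 1$ and the stopping point $i = (1-\eta^{1/3})d\nu$ is calibrated so that these two estimates overlap throughout the extraction.
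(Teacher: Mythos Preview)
The paper does not prove this lemma at all; it is quoted from \cite{FK12} and used as a black box, so there is no in-paper proof to compare your attempt against.

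For what it is worth, your outline---iteratively peel off perfect matchings, certify each one via Hall's theorem, and verify Hall by splitting on $|S|$ with a Cauchy--Schwarz/co-degree estimate in the middle range---is exactly the strategy employed in \cite{FK12}. You are also right that the ``central band'' is where the work lies. One ingredient worth making explicit, which your sketch does not mention and which is what actually closes that band, is that the hypotheses already force an \emph{upper} bound on degrees: double-counting
\[
\sum_{w\neq v} d(v,w)\;=\;\sum_{u\in N(v)} \bigl(d(u)-1\bigr)\;\geq\; d(v)\bigl((1-\eta)\nu d-1\bigr),
\]
together with the co-degree cap $d(v,w)\le (1+\eta)\nu d^2$, gives $d(v)\le (1+O(\eta))\nu d$. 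Consequently every residual graph $G^{(i)}$ is nearly regular, with $D_i-\delta_i=O(\eta\nu d)$ uniformly in $i$, and the crude estimate $|S|\delta_i\le e(S,N(S))\le |N(S)|D_i$ already yields $|N(S)|\ge (1-O(\eta^{2/3}))|S|$. Combining this near-regularity with the co-degree/Cauchy--Schwarz bound is what makes the two symmetric estimates on $S$ and on $B\setminus N(S)$ overlap; as written, your two Cauchy--Schwarz bounds alone only pin $|S|$ to the window $[\eta^{2/3}\nu,(1-\eta^{2/3})\nu]$ and do not by themselves give a contradiction.
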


\begin{proof}[Proof of Theorem \ref{mainpseudo}.]
  Let $G$ be an $(\e,p)$-regular graph on $n$ vertices with $\e^{6}np^4 \gg
  \log^3n$.  Apply Procedure \ref{proc1}.  Our conditions on $\e,p$, and
  $n$ allow us to apply Lemma \ref{ghatsaregood} and to conclude that at the
  end of Step \tbf{P3} of Procedure \ref{proc1}, every $\gh{i}$ is a
  $(7\e,\frac{p}{\k})$-regular blow up of $T$. We also have that each
  edge of $G$
  appears in \emph{exactly}\/ one of the $\gh{i}$.

  Consider one of the $\gh{i}$, and call its $t-1$ super-edges
  $Q_1,\ldots,Q_{t-1}$. On each of these super-edges,  we apply Lemma
  \ref{packmatch:pseudo} with $\nu=n/t$, $\eta=7\e$ and $d=\frac{p}{\k}$.
  Then we have that each $Q_j$ contains a collection of edge-disjoint
  perfect matchings $\scr{M}_j$ of size at least
  \[s:=(1-(7\e)^{1/3})\frac{p}{\k}\frac{n}{t}.\]
  Now select arbitrary matchings $M_1 \in \scr{M}_1, M_2\in\scr{M}_2,\ldots,M_{t-1}\in\scr{M}_{t-1}.$
  Observe that
  \[M_1\cup M_2\cup\cdots\cup M_{t-1}\]
  is a $T$ factor since the super-edge structure of $\gh{i}$ is isomorphic
  to $T$. We may thus extract at least $s$ edge-disjoint $T$-factors from
  $\gh{i}$.  Indeed, we may do this for each of $\gh{1},\ldots,\gh{r}$.
  Tree factors extracted from distinct $\gh{i}$ are edge-disjoint.

  In total, the number of edges covered by these tree factors is at least
  \[s\cdot\frac{n}{t}\cdot(t-1)\cdot r = (1-(7\e)^{1/3})\frac{n^2}{2}p\,,\]
  while the $(\e,p)$-regularity of $G$ tells us that $G$ had at most
  $(1+\e)\frac{n^2}{2}p$ edges total.  So, the total fraction covered is at
  least
  \[\frac{1-(7\e)^{1/3}}{1+\e} \ge 1-2\e^{1/3}\,,\]
  as long as $\e$ is sufficiently small.
\end{proof}

\section{Proof of Theorem \ref{randomthmlogsq}}

Direct application of Theorem \ref{mainpseudo} for pseudo-random
graphs gives a packing result for random graphs with $p \gg n^{-1/4}
\log^{3/4} n$. In this section, we use additional properties of
random graphs to improve the range of $p$ to $p \gg \frac{\log^2
n}{n}$.  For this, we will apply the following result, which is an
analogue of Lemma \ref{packmatch:pseudo} for the fully random graph
setting.

\begin{lemma}[Frieze, Krivelevich \cite{FK12}]\label{packmatch:random}
  Let $G$ be a random bipartite graph with sides $A, B$ of size $\abs{A} =
  \abs{B} = \n$, where each edge appears independently with probability at
  least $p = p(\n)$. Assume that $p(\n) \gg \frac{\log \n}{\n}$. Then with
  probability $1 - o(\n^{-1})$, $G$ contains a family of $(1 - \d)\n p$
  edge-disjoint perfect matchings, where
  \[\d = \bfrac{16\log \n}{\n p}^{1/2}.\]
\end{lemma}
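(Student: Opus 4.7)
The plan is to follow the same template as the proof of Theorem \ref{mainpseudo}, but to exploit the genuine edge independence of $G_{n,p}$ so that each super-edge of each $\gh{i}$ can be handled by the random bipartite matching packing lemma (Lemma \ref{packmatch:random}) instead of the pseudo-random one (Lemma \ref{packmatch:pseudo}). Concretely, I would apply Procedure \ref{proc1} directly to $G_{n,p}$, producing edge-disjoint graphs $\gh{1},\ldots,\gh{r}$ with $r = \frac{30}{\e^2}\frac{t^2}{t-1}\log n$, and argue that whp each $\gh{i}$ admits many edge-disjoint $T$-factors, without passing through the $(\e,p)$-regularity notion.

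The crux of the argument is the following independence observation. After conditioning on the permutations $\s_1,\ldots,\s_r$, for each pair $e \in \binom{[n]}{2}$ the set $L_e \subseteq [r]$ is deterministic, and the event $\{e \in \gh{i}\}$ (for $i \in L_e$) occurs with probability $p/|L_e|$, drawn from two independent sources: the Bernoulli$(p)$ indicator for $e \in G_{n,p}$ and the uniform random label drawn from $L_e$. Since both sources are independent across distinct edges of $\binom{[n]}{2}$, the edges of any super-edge of $\gh{i}$ are independently present, with marginal probabilities $p/|L_e|$. A minor adaptation of Lemma \ref{everyEdgeIn} (applied to every pair of vertices, not only to edges that happen to lie in $G_{n,p}$) yields $|L_e| = (1\pm\e)\kappa$ for all pairs simultaneously, where $\kappa = \frac{60}{\e^2}\log n$. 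Hence each super-edge of every $\gh{i}$ stochastically dominates a random bipartite graph on parts of size $\nu = n/t$ with edge probability $p' := \frac{p}{(1+\e)\kappa} = \Theta\!\left(\frac{\e^2 p}{\log n}\right)$.

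Applying Lemma \ref{packmatch:random} to each of the $(t-1)r$ super-edges produces, at failure probability $o(\nu^{-1})$ each (summable to $o(1)$ since $r = O(\log n)$), a family of $(1-\d)\nu p'$ edge-disjoint perfect matchings per super-edge, where $\d = \bigl(\tfrac{16 \log \nu}{\nu p'}\bigr)^{1/2}$. The hypothesis $\e^4 np \gg \log^2 n$ translates exactly to $\d = O(\e)$, which is precisely why this approach outperforms a direct invocation of Theorem \ref{mainpseudo}. Concatenating one matching from each of the $t-1$ super-edges of $\gh{i}$ produces a $T$-factor (since the super-edge structure of $\gh{i}$ is isomorphic to $T$), and summing over the $r$ edge-disjoint graphs $\gh{i}$ gives a covered edge count of $r \cdot (t-1) \cdot \nu \cdot (1-\d)\nu p' = (1 - O(\e))\binom{n}{2}p$, matching the total edge count of $G_{n,p}$ up to a $1 - O(\e)$ factor by Chernoff. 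The main obstacle is the conditional independence claim of the second paragraph; once it is verified, Lemma \ref{packmatch:random} handles the rest as a black box, and the remaining work is a routine union bound together with the edge-counting bookkeeping already carried out in the proof of Theorem \ref{mainpseudo}.
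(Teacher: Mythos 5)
There is a fundamental mismatch here: the statement you were asked to prove is Lemma \ref{packmatch:random} itself --- the assertion that a random bipartite graph $B_{\nu,\nu,p}$ with $p \gg \frac{\log\nu}{\nu}$ contains $(1-\d)\nu p$ edge-disjoint perfect matchings with $\d = \bigl(\frac{16\log\nu}{\nu p}\bigr)^{1/2}$, with probability $1-o(\nu^{-1})$. Your proposal never engages with that claim. Instead, it is an outline of the proof of Theorem \ref{randomthmlogsq}, and it invokes Lemma \ref{packmatch:random} ``as a black box'' in its third paragraph. As a proof of the lemma, the argument is therefore circular: you cannot establish the matching-packing property of a random bipartite graph by an argument whose key step is to apply that very property to the super-edges of the $\gh{i}$. (As a proof of Theorem \ref{randomthmlogsq}, your outline is essentially the paper's own: condition on the permutations and on $|L_e| = (1\pm\e)\kappa$, observe that each super-edge of each $\gh{i}$ dominates $B_{\nu,\nu,p'}$ with $p' = p/((1+\e)\kappa)$, apply the lemma, and count. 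But that is not the statement at hand.)

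What is actually needed is a self-contained argument about a single random bipartite graph: for instance, iteratively extracting perfect matchings and showing, via expansion/Hall-type conditions that hold with the stated probability, that after removing any $(1-\d)\nu p$ perfect matchings the residual graph still has minimum degree and expansion sufficient to contain another perfect matching. The paper itself does not prove this lemma --- it imports it from Frieze and Krivelevich \cite{FK12} --- so a ``blind proof'' of this statement would have to reconstruct that external argument, not the application of it that appears in Section 4.
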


\begin{proof}[Proof of Theorem \ref{randomthmlogsq}]
  The proof of this theorem is essentially identical to that in the
  previous section. Run Procedure \ref{proc1} on a random graph $G=G_{n,p}$
  with $n$ a multiple of $t$, and let $\nu = n/t$.  Since $G$ is random,
  after Step \tbf{P1},  each super-edge of each $G_i$ is a copy of
  $B_{\nu,\nu,p}$, the random bipartite graph with parts of size $\nu$ and
  edge probability $p$.  The proof of Lemma \ref{everyEdgeIn} applied to
  $G_{n,p}$ instead of an \epr graph gives us that with probability
  $1-o(n^{-1})$, each edge appears in $(1\pm \e)\k$ of the $G_i$'s where
  $\k = \frac{60}{\e^2}\log n$.

  Conditioning on this, we see that after Step \tbf{P3} of Procedure
  \ref{proc1}, in a particular $\gh{i}$, across a super-edge $(A,B)$, each
  pair $(a,b), a\in A, b\in B$ is an edge of $\gh{i}$ with probability at
  least
  \begin{displaymath}
    q :=
    p\cdot\frac{1}{(1+\e)\k}
    \gg \frac{\log n}{n}
    = \Theta\left(\frac{\log\n}{\n}\right).
  \end{displaymath}
  So, we may apply Lemma \ref{packmatch:random} to each of the $(t-1)r$
  super-edges in the $\gh{i}$'s.  Since $(t-1)r \ll \nu$, we have that
  \tbf{whp}, each of the $(t-1)r$ super-edges satisfies the conclusion of
  the lemma.

  Consider one of the $\gh{i}$'s and suppose that we call its $t-1$
  super-edges $Q_1,\ldots,Q_{t-1}$. Then we have that each $Q_j$ contains
  a collection of edge-disjoint perfect matchings $\scr{M}_j$ of size at
  least
  \[s:=(1-\d)q\n.\]
  Note that $\d$ is bounded by
  \[\bfrac{16\log\nu}{\nu \frac{p}{(1+\e)\k}}^{1/2} = O(\e).\]
  As before, selecting arbitrary matchings $M_1 \in \scr{M}_1,
  M_2\in\scr{M}_2,\ldots,M_{t-1}\in\scr{M}_{t-1}$ gives a $T$-factor
  \[M_1\cup M_2\cup\cdots\cup M_{t-1}.\]
 We may thus extract at least $s$ edge-disjoint $T$-factors from $\gh{i}$ and do this for each of $\gh{1},\ldots,\gh{r}$. In total, the number of edges covered will be at least
\[s\cdot\n\cdot(t-1)\cdot r = (1-O(\e))\frac{n^2}{2}p\]
which is the desired number of edges since the graph has at most
$(1+\e)\binom{n}{2}p$ edges total \tbf{whp} by Theorem \ref{chernoff}.

\end{proof}

\section{Proof of Theorem \ref{randomthmlog}}

We conclude by introducing a different argument which ``bootstraps''
Theorem \ref{mainpseudo} to drive the range of $p$ all the way down to
about $\frac{\log n}{n}$.  Note that this is essentially the limit, because
for $p$ below $\frac{\log n}{n}$, the random graph typically contains
isolated vertices, and therefore finding even a single $T$-factor would be
impossible.

\begin{proof}[Proof of Theorem \ref{randomthmlog}]
  Let $\tau_1$ be the smallest value of $\tau$ for which Theorem 1 applies
  for $(\epsilon^3, 1)$-regular graphs on $\tau$ vertices.  Let $\tau_0 =
  \max\{\tau_1, \epsilon^{-3}\}$, and assume that $\tau \geq \tau_0$, with
  $t \mid \tau$.  Define $C = \tau \epsilon^{-2}$.  The idea of the proof
  is to first split the vertex set of $G\sim G_{n,p}$ into $\tau$ parts
  $V_1 \cup \cdots \cup V_\tau$ of size $\ell$ each.  We then think of
  these parts as vertices of the complete graph on $\tau$ vertices and note
  that the complete graph $K_\tau$ is a $(\d,1)$-regular graph for any $\d
  \geq 1/\tau$. We apply Theorem \ref{mainpseudo} to find a collection of
  edge-disjoint $T$-factors which cover almost all the edges of $K_\tau$.
  Each edge appearing in a $T$-factor of $K_\tau$ corresponds to a random
  bipartite graph $B_{\ell,\ell,p}$ in $G$. To each such bipartite graph we
  apply Lemma \ref{packmatch:random}. We must show that the total number of
  edges covered is at least $(1-O(\e))\binom{n}{2}p$ since by Theorem
  \ref{chernoff}, $G_{n,p}$ has at most $(1+\e)\binom{n}{2}p$ edges total
  \tbf{whp}.

  We now analyze this procedure quantitatively.  We fail to cover edges in
  three ways: when they are within a single $V_i$, when they are between a
  pair $(V_i, V_j)$ which is not covered by a $T$-factor, and when they are
  within a $T$-factor edge, but missed by Lemma \ref{packmatch:random}.  We
  must ensure that the total fraction missed is $O(\epsilon)$.  To this
  end, note that the first omission loses only at most
  $\frac{1}{\tau}$-fraction of the edges, while the second loses at most $2
  \delta^{1/3}$-fraction of the edges by Theorem \ref{mainpseudo}.
  Therefore, as long as $\tau > \tau_0 = \epsilon^{-3}$ (which implies that
  $K_\tau$ is $(\delta,1)$-regular with $\delta = \epsilon^3$), the total
  loss from the first two types is only $O(\epsilon)$.  Note that this
  bounded loss is completely deterministic.

  We control the third type of omission using the randomness in $G_{n,p}$.
  The bipartite graph between every pair $(V_i,V_j)$ covered by a
  $T$-factor in $K_\tau$ is a copy of the random bipartite graph
  $B_{\ell,\ell,p}$.  If we apply Lemma \ref{packmatch:random} to such a
  graph, we obtain a collection $\scr{M}_{i,j}$ of at least
  $\of{1-\bfrac{16\log\ell}{\ell p}^{1/2}}\ell p$ edge-disjoint matchings
  with probability $1-o(\ell^{-1})$.  Since we are proving that $G_{n,p}$
  satisfies $\scr{P}(\epsilon)$ \tbf{whp}, $\tau$ is a constant while $n
  \rightarrow \infty$, and therefore $\tau^2 \ll \ell$; a union bound then
  implies that \tbf{whp}, every pair $(V_i,V_j)$ from the $T$-factor of
  $K_\tau$ contains such a collection $\scr{M}_{i,j}$.  As in the proofs of
  our other two results, these perfect matchings combine to form
  $T$-factors of the full $n$-vertex graph.  It therefore remains only to
  show that the fractional loss can be kept below $O(\epsilon)$.  For
  this, we use $p > \frac{C \log n}{n}$, and simplify:
  \begin{displaymath}
    \bfrac{16\log\ell}{\ell p}^{1/2}
    < \bfrac{16\log\ell}{\ell \frac{C \log(\tau\ell)}{\tau \ell}}^{1/2}
    =O\left(\left( \frac{\tau}{C} \right)^{1/2}\right)
    = O(\epsilon) \,,
  \end{displaymath}
  since $C = \tau \epsilon^{-2}$.
\end{proof}

\end{document}